\documentclass[12pt]{article}

\usepackage[dvips]{graphicx}
\usepackage{amssymb}
\usepackage{amsmath}
\usepackage{amsthm}

\newtheorem{theorem}{Theorem}
\newtheorem{lemma}[theorem]{Lemma}
\newenvironment{keywords}{{\bf Keywords:}}

\newcommand{\R}{{\mathbb R}}
\newcommand{\Q}{{\mathbb Q}}
\newcommand{\N}{{\mathbb N}}

\begin{document}

\title {On profinite spaces of orderings}

\author{Pawe\l \ G\l adki
\thanks{Corresponding author: phone +48 32 359 2228, fax +48 32 258 2976 } 
\thanks{This research was conducted while the first author was employed as a visiting assistant professor by the Department of Mathematics, University of California, Santa Barbara, CA, USA, 93106.}\\
{\small \sc Institute of Mathematics,} \\ 
{\small \sc University of Silesia,}\\
{\small \sc ul. Bankowa 14,}\\
{\small \sc Katowice, Poland, 40-007}\\
{\small \tt pawel.gladki@us.edu.pl}
\and Bill Jacob\\
{\small \sc Department of Mathematics,} \\ 
{\small \sc University of California, Santa Barbara,}\\
{\small \sc Santa Barbara, CA, USA, 93106}\\
{\small \tt jacob@math.ucsb.edu}
}

\date{}

\maketitle

\begin{abstract}
In this paper we present the following two results: we give an explicit description of the space of orderings $(X_{\Q(x)}, G_{\Q(x)})$ as an inverse limit of finite spaces of orderings and we provide a new, simple proof of the fact that the class of spaces of orderings for which the pp conjecture holds true is closed under inverse limits. We discuss how these theorems interact with each other, and explain our motivation to look into these problems.
\end{abstract}

\begin{keywords} quadratic forms, spaces of orderings.\\
{\em MSC:} primary 11E10, secondary 12D15
\end{keywords}

\section{Introduction and notation}

The theory of abstract spaces of orderings was developed by Murray Marshall in the 1970s (see the monograph \cite{Marshall1996}), and provides a convenient framework for studying orderings of fields and the reduced theory of quadratic forms. Spaces of orderings also occur in a natural way in other, more general settings: as maximal orderings on semi-local rings, as orderings on skew fields, or as orderings on ternary fields. The axioms for spaces of orderings have been also generalized in various directions -- to quaternionic schemes, to spaces of signatures of higher level, or to abstract real spectra that are used to study orderings on commutative rings.

Among all spaces of orderings, profinite spaces, that were introduced in \cite{Marshall1979}, are of considerable interest, and numerous questions regarding such spaces remain open. In particular, it is important to recognize which spaces of orderings are inverse limits of finite spaces \cite[Question 1]{Marshall1979}, and in this paper we provide some insights into this question. As the first result, we show that the space of orderings of the field $\Q(x)$ is profinite. The proof presented here is, in a way, constructive, and gives a ``geometric'' meaning to the result. Then, in Section 3, we exhibit a few spaces that are not profinite -- we do it by means of the pp conjecture.
To be more specific, we give another proof of a theorem previously proven by Astier and Tressl in \cite{AstTre} that the class of spaces for which the pp conjecture holds true is closed with respect to inverse limits -- in particular, we show that the conjecture is valid for profinite spaces -- and then we recall familiar examples of spaces for which the conjecture fails. This relates to another long standing question, namely if every abstract space of orderings is realized as a subspace of a space of orderings of a field: it seems likely that spaces of orderings exist which are not so realized, but to date no such examples are known.

Throughout this paper by $(X, G)$ we understand a space of orderings in the sense of \cite[pp. 21-22]{Marshall1996}: $X$ is a nonempty set, $G$ is a subgroup of $\{1, -1\}^X$, which contains the constant function $-1$, separates points of $X$, and satisfies some extra axioms -- see \cite{Marshall1996} for details. $X$ can be also viewed as a subset of the character group $\chi(G)$ (here by characters we mean group homomorphisms $x: G \rightarrow \{-1, 1\}$) via a natural embedding $X \hookrightarrow \chi(G)$ obtained by identifying $x \in X$ with the character $G \ni a \mapsto a(x) \in \{-1, 1\}$.

The theory of spaces of orderings is parallel to the theory of reduced special groups in the language $L_{SG}$ of special groups (see \cite{DicMir00} for a full list of axioms). We also note that spaces of orderings are essentially the same thing as real reduced multifields (see \cite{Mar06}), and we could use the language of rings with multivalued addition instead of the language of special groups.

If $(X, G)$ is a space of orderings, and $G_0$ is a subgroup of $G$, we denote by $X_0$ the set of all characters from $X$ restricted to $G_0$. In the case when $(X_0, G_0)$ is a space of orderings, following \cite{Marshall1979} we call it a quotient space of $(X, G)$ -- otherwise, in general, we call it a {\em quotient structure}. As noted in  \cite{Marshall1979}, if $(X_0, G_0)$ is a space of orderings then $-1\in G_0$.

For a space of orderings $(X, G)$ and two elements $a, b \in G$ we define the binary form as the formal tuple $(a,b)$. The value set of this binary form is then defined as
$$D_X(a,b) = \{c \in G: \forall x \in X \ (a(x) = c(x) \vee b(x) = c(x))\}.$$

By a morphism $F$ from a space of orderings $(X_1, G_1)$ to a space of orderings $(X_2, G_2)$ we understand a function $F: X_1 \rightarrow X_2$ such that
$$\forall b \in G_2 \ (b \circ F \in G_1).$$
A morphism $F:(X_1, G_1) \rightarrow (X_2, G_2)$ defines a group homomorphism $F^*: G_2 \rightarrow G_1$ given by $F^*(b) = b \circ F$ which also satisfies the condition
$$\forall b_1, b_2, b_3 \in G_2 \ [(b_1 \in D_{X_2}(b_2, b_3)) \Rightarrow (F^*(b_1) \in D_{X_1}(F^*(b_2), F^*(b_3)))],$$
and thus becomes a morphism of reduced special groups. Clearly, a bijective morphism will be called an isomorphism, and we shall write $(X_1, G_1) \cong (X_2, G_2)$ to indicate that the two spaces of orderings are isomorphic.

An inverse system of spaces of orderings is a triple consisting of: (1) a directed set $(I, \succeq)$, (2) spaces of orderings $(X_i, G_i)$, $i \in I$, and (3) morphisms $F_{ij}: (X_i, G_i) \rightarrow (X_j, G_j)$ defined for $i \succeq j$, $i, j \in I$, such that (a) $F_{ij} (X_i) = X_j$, which implies that
$F^*_{ij}:G_j\rightarrow G_i$ is injective, and (b) $F_{ik} = F_{jk} \circ F_{ij}$, for $i \succeq j \succeq k$, $i, j, k \in I$.

Clearly, an inverse system $(I, (X_i, G_i), F_{ij})$ of spaces of orderings automatically defines both a direct system of groups $(I, G_i, F_{ij}^*)$, and an inverse system of character sets $(I, X_i, F_{ij})$. Further, if we let $G = \underrightarrow{\lim } G_i$, and $X = \underleftarrow{\lim } X_i$, then $(X, G)$ is a space of orderings that is called the inverse limit of the given inverse system and denoted by $\underleftarrow{\lim } (X_i, G_i)$ (\cite[Theorem 4.3]{Marshall1979}). For a fixed $j \in I$ we will denote by $\pi_j$ the projection $\pi_j: X \rightarrow X_j$ such that $\pi_j = F_{ij} \circ \pi_i$, for $i \succeq j$, $i \in I$, and by $\gamma_j$ the injection $\gamma_j: G_j \rightarrow G$ such that $\gamma_j = \gamma_i \circ F_{ij}^*$, for $i \succeq j$, $i \in I$. Since, in fact, $G = \bigcup_{i \in I} G_i$, we will use the same symbol $a$ for an element $a \in G_i$ and its image $a \in \gamma_i(G_i) \subset G$. A space of orderings which is an inverse limit of finite spaces of orderings will be called profinite.

We shall say that $(X, G)$ is the direct sum of the spaces of orderings $(X_i, G_i)$, $i \in \{1, \ldots, n\}$, denoted $(X, G) = \coprod_{i=1}^n (X_i, G_i) = (X_1, G_1) \sqcup \ldots \sqcup (X_n, G_n)$, if $X$ is the disjoint union of the sets $X_1, \ldots, X_n$, and $G$ consists of all functions $a: X \rightarrow \{-1, 1\}$ such that $a|_{X_i} \in G_i$, $i \in \{1, \ldots, n\}$. In this case $G = G_1 \oplus G_2 \oplus \ldots \oplus G_n$, with the role of the distinguished element $-1$ played by $(-1, -1, \ldots, -1)$. Further, we shall say that $(X, G)$ is a group extension of the space of orderings $(\overline{X}, \overline{G})$, if $G$ is a group of exponent 2, $\overline{G}$ is a subgroup of $G$, and $X = \{x \in \chi(G): x|_{\overline{G}} \in \overline{X}\}$. Since $G$ decomposes as $G = \overline{G} \times H$, we shall also write $(X,G) = (\overline{X}, \overline{G}) \times H$ to denote group extensions. Both direct sums and group extensions are spaces of orderings themselves (\cite[Theorem 4.1.1]{Marshall1996}), and every finite space of orderings is built up, in an essentially unique way, from one element spaces, using repeatedly the direct sum and group extension operations. The non-uniqueness arises only from the exceptional property of the two element space, that can be viewed either as the direct sum of two one element spaces, or as a group extension of a one element space (\cite[Theorem 4.2.2]{Marshall1996}). 

For any space of orderings $(X, G)$, $X$ has a natural topology, namely the one introduced by the family of subbasic clopen Harrison sets of the form:
$$H_X(a) = \{x \in X: a(x) = 1\},$$
for a given $a \in G$. $X$ endowed with this topology is a Boolean space (that is compact, Hausdorff, and totally disconnected) (\cite[Theorem 2.1.5]{Marshall1996}). A subset $Y \subset X$ is called a subspace of $(X, G)$, if $Y$ is expressible in the form $\bigcap_{a \in S} H_X(a)$, for some subset $S \subset G$. For any subspace $Y$ we will denote by $G|_Y$ the group of all restrictions $a|_Y$, $a \in G$. The pair $(Y, G|_Y)$ is a space of orderings itself (\cite[Theorem 2.4.3]{Marshall1996}).

If $G$ is any multiplicative group of exponent 2 with distinguished element $-1$, we set $X = \{x \in \chi(G): x(-1) = -1\}$ and call the pair $(X,G)$ a fan. Any fan is also a space of orderings (\cite[Theorem 3.1.1]{Marshall1996}). If $(X, G)$ is a space of orderings, by a fan in $(X, G)$ we understand a subspace ${\cal F}$ such that the space $({\cal F}, G|_{{\cal F}})$ is a fan. The stability index of a space of orderings $(X, G)$ is the maximum $n$ such that there exists a fan ${\cal F} \subset X$ with $|{\cal F}| = 2^n$ (or $\infty$ if there is no such $n$).

For a space of orderings $(X, G)$ we define the connectivity relation $\sim$ as follows: if $x_1, x_2 \in X$, then $x_1 \sim x_2$ if and only if either $x_1 = x_2$ or there exists a four element fan ${\cal F}$ in $(X, G)$ such that $x_1, x_2 \in {\cal F}$. The equivalence classes with respect to $\sim$ are called the connected components of $(X, G)$ (see \cite[p. 66]{Marshall1996}; the fact that $\sim$ is, in fact, an equivalence relation follows from \cite[Theorem 4.6.1]{Marshall1996}). Consequently, if $(X, G)$ is a finite space of orderings, and $X_1, \ldots, X_n$ are its connected components, then $(X, G) = (X_1, G|_{X_1}) \sqcup \ldots \sqcup (X_n, G|_{X_n})$, where $(X_i, G|_{X_i})$, $i \in \{1, \ldots, n\}$, are either one element spaces or proper group extensions of some other spaces (\cite[Theorem 4.2.2]{Marshall1996}).

For a formally real field $k$, we will denote by $X_k$ the set of all orderings of $k$, and by $G_k$ the multiplicative group $k^*/(\Sigma k^2)^*$ of all classes of sums of squares of $k^*$. $G_k$ is naturally identified with a subgroup of $\{-1,1\}^{X_k}$ via the homomorphism
$$k^* \ni a \mapsto \overline{a} \in \{-1, 1\}^{X_k}, \mbox{ where } \overline{a}(\sigma) = \left\{ \begin{array}{ll} 1, & \mbox{ if } a \in \sigma,\\
-1, & \mbox{ if } a \notin \sigma, \end{array} \mbox{ for } \sigma \in X_k, \right.$$
whose kernel is $(\Sigma k^2)^*$, and $(X_k, G_k)$ is a space of orderings (\cite[Theorem 2.1.4]{Marshall1996}). For the sake of simplicity we shall denote by the same symbol $a$ both an element $a \in k^*$, a class of sums of squares $a \in k^*/(\Sigma k^2)^*$, and a function $a \in \{-1, 1\}^{X_k}$. Also, for an abstract space of orderings $(X, G)$ we will usually denote elements of the set $X$ by small letters $x, y, z, \ldots$, while for a space of orderings $(X_k, G_k)$ of a field $k$ we shall denote orderings from the set $X_k$ by small Greek letters $\sigma, \tau, \upsilon, \ldots$

Recall that a preordering of a formally real field $k$ is a subset $T \subset k$ such that $T + T \subset T$, $T \cdot T \subset T$, and $k^2 \subset T$. Every proper preordering can be extended to an ordering (\cite[Theorem 1.1.1]{Marshall1996}), and the set of all orderings $P$ of $k$ such that $P \supset T$, for a given preordering $T$, will be denoted by $X_T$. Also, the group $k^*/T^*$ will be denoted by $G_T$. Subspaces of the space of orderings $(X_k, G_k)$ have the form $(X_T, G_T)$, where $T$ is some preordering in $k$ (\cite[page 33]{Marshall1996}). 

We shall describe elements of $X_{\Q(x)}$ in some more detail (see, for example, \cite[Notation 1.4]{DicMarMir}). Each irreducible polynomial $p \in \Q[x]$ with real roots $\alpha_1 < \ldots < \alpha_n$, $n \geq 1$, gives rise to $2n$ orderings of $\Q(x)$, namely $\sigma_j^-$ and $\sigma_j^+$, $j \in \{1, \ldots, n\}$, defined as follows: for $a \in \Q(x)^*$ and $j \in \{1, \ldots, n\}$, $a \in \sigma_j^-$ if and only if, for some $\epsilon > 0$, $a$ is strictly positive on the interval $(\alpha_j - \epsilon, \alpha_j)$, and $a \in \sigma_j^+$ if and only if, for some $\epsilon > 0$, $a$ is strictly positive on the interval $(\alpha_j, \alpha_j + \epsilon)$. Similarly, we define two orderings ``at infinity'' $\infty^-$ and $\infty^+$: for $a \in \Q(x)^*$, $a \in \infty^-$ if and only if, for some $\xi \in \Q$, $a$ is strictly positive on the interval $(-\infty, \xi)$, and $a \in \infty^+$ if and only if, for some $\xi \in \Q$, $a$ is strictly positive on the interval $(\xi, +\infty)$. Finally, for each transcendental number $\zeta \in \R$, we consider the embedding $\Q(x) \hookrightarrow \R$ defined by $x \mapsto \zeta$, that induces an ordering by taking the counterimage of all nonnegative reals. These are precisely all the elements of $X_{\Q(x)}$. The four-element fans in $(X_{\Q(x)}, G_{\Q(x)})$ are the sets $\{\sigma_i^-, \sigma_i^+, \sigma_j^-, \sigma_j^+\}$, for $1 \leq i < j \leq n$, and for some irreducible polynomial having $n \geq 2$ real roots (\cite[Notation 1.4]{DicMarMir}).

\section{Representation of $(X_{\Q(x)}, G_{\Q(x)})$ as a profinite space}

We prove here the following:

\begin{theorem} The space of orderings $(X_{\Q(x)}, G_{\Q(x)})$ is profinite.
\end{theorem}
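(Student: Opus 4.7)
The plan is to exhibit $(X_{\Q(x)}, G_{\Q(x)})$ as the inverse limit of an explicit inverse system of finite spaces of orderings indexed by pairs $\iota = (S, \Xi)$: here $S \subset \Q[x]$ is a finite set of monic irreducible polynomials with at least one real root, and $\Xi \subset \Q$ is a finite set of rational numbers large enough to separate the real roots in $R_S := \bigcup_{p \in S}\{\mbox{real roots of }p\}$, containing at least one rational strictly between every two consecutive elements of $R_S$ together with one below $\min R_S$ and one above $\max R_S$. For each $\iota = (S,\Xi)$, I define $G_\iota$ to be the subgroup of $G_{\Q(x)}$ generated by $-1$, the classes of the $p \in S$, and the classes of $(x-\xi)$ for $\xi \in \Xi$; and $X_\iota$ to be the image of $X_{\Q(x)}$ in $\chi(G_\iota)$ under restriction of characters.

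First I would describe $(X_\iota, G_\iota)$ explicitly. The characters of $G_\iota$ realized by $X_{\Q(x)}$ are in bijection with the open intervals cut out on $\R$ by the special points $R_S \cup \Xi$, since the chosen generators of $G_\iota$ change sign precisely at those points. Using the inductive classification of finite spaces of orderings via direct sum and group extension recalled from Marshall's monograph, I would verify that $(X_\iota, G_\iota)$ decomposes as a direct sum of singleton pieces (corresponding to the two outermost intervals) and connected components $\mathcal{C}_p$, one for each $p \in S$, supported on the $2n_p$ characters $\alpha^\pm$ coming from the $n_p$ real roots of $p$. The group $G_\iota|_{\mathcal{C}_p}$ is generated by $-1$, the class of $p$, and the restrictions of the remaining generators, and $\mathcal{C}_p$ is built inductively so that its four-element fans match the fans $\{\alpha_i^\pm, \alpha_j^\pm\}$ described in the introduction. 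The rationals in $\Xi$ are precisely what forces $G_\iota$ to split as a direct sum matching this decomposition; this verification is the main technical obstacle, since the value-set axioms for a quotient structure do not formally descend from the ambient space.

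Ordering $I$ by $(S',\Xi') \succeq (S,\Xi) \iff S' \supseteq S$ and $\Xi' \supseteq \Xi$ makes $I$ a directed set, and the inclusion $G_\iota \hookrightarrow G_{\iota'}$ dually induces a surjective morphism $F_{\iota'\iota} \colon (X_{\iota'}, G_{\iota'}) \to (X_\iota, G_\iota)$, compatible with composition, so the data genuinely form an inverse system of finite spaces of orderings.

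Finally I would identify $(X_{\Q(x)}, G_{\Q(x)})$ with $\underleftarrow{\lim } (X_\iota, G_\iota)$. The identity $\bigcup_\iota G_\iota = G_{\Q(x)}$ follows from Artin's solution to Hilbert's 17th problem: any irreducible polynomial in $\Q[x]$ with no real root is positive at every ordering of $\Q(x)$, hence a sum of squares there, hence trivial in $G_{\Q(x)}$; so every class in $G_{\Q(x)}$ is a product of generators of some $G_\iota$. The natural map $X_{\Q(x)} \to \underleftarrow{\lim } X_\iota$ is injective since $G_{\Q(x)}$ separates points of $X_{\Q(x)}$, and surjective by a compactness argument in the Boolean topology on $X_{\Q(x)}$: a compatible family $(\chi_\iota)$ determines closed cylinder subsets $\{\sigma \in X_{\Q(x)} : \sigma|_{G_\iota} = \chi_\iota\}$ having the finite intersection property (any finite subfamily is realized by a single ordering corresponding to a refining index), whose intersection is therefore a nonempty set of orderings realizing the family.
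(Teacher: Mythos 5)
Your proposal is correct and follows essentially the same route as the paper: your finite quotients indexed by $(S,\Xi)$ are exactly the subgroups of $G_{\Q(x)}$ generated by the relevant irreducible polynomials together with separating rational lines, and the key verification that the resulting quotient structures are genuine (finite) spaces of orderings is carried out, just as in the paper, by identifying the realized characters with the intervals cut out by $R_S\cup\Xi$ and appealing to the direct-sum/group-extension structure theorem. The only difference is one of packaging: you assemble the inverse system and identify its limit by hand (Artin--Schreier to get $\bigcup_\iota G_\iota = G_{\Q(x)}$, compactness for surjectivity onto $\underleftarrow{\lim}\, X_\iota$), whereas the paper delegates precisely this bookkeeping to \cite[Remark 5.5]{Marshall1979} and only needs to exhibit, for each finite subset of $G_{\Q(x)}$, a finite quotient space whose group contains it.
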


\begin{proof} It suffices to show that for a given finite subset $\{p_1, \ldots, p_m\} \subset G_{\Q(x)}$ there exists a finite quotient space $(X_0, G_0)$ of $(X_{\Q(x)}, G_{\Q(x)})$ such that 
$$p_1, \ldots, p_m \in G_0$$ (\cite[Remark 5.5]{Marshall1979}). Thus let $\{p_1, \ldots, p_m\} \subset G_{\Q(x)}$, and let $\widetilde{G} = \langle p_1, \ldots, p_m \rangle$ be a subgroup of $G_{\Q(x)}$ generated by the elements $p_1, \ldots, p_m$. Without loss of generality we may assume that $p_1, \ldots, p_m$ are square free polynomials, and replacing, if necessary, the set $\{p_1, \dots, p_m\}$ with the set of all irreducible factors of $p_1, \ldots, p_m$, we may also assume that the sets of real roots of polynomials $p_1, \ldots, p_m$ are pairwise disjoint.

Let $\alpha_{k,1} < \ldots < \alpha_{k,n_k}$ denote all the real roots of $p_k$, $k \in \{1, \ldots, m\}$.
 If the total number of roots of all $p_k$ is $N$, we separate them with $N+1$ rational lines -- we denote the two lines neighboring $\alpha_{k,j}$ by $\ell_{k,j}^-$, $\ell_{k,j}^+$, where $\ell_{k,j}^- = x - \xi_{k,j}^- \in \Q[x]$, $\ell_{k,j}^+ = x - \xi_{k,j}^+ \in \Q[x]$, and $\xi_{k,j}^- < \xi_{k,j}^+$. Note that if $\alpha_{k',j'}$ and $\alpha_{k'',j''}$, $\alpha_{k',j'} < \alpha_{k'',j''}$, are two consecutive roots from the set $\{ \alpha_{k,j}: k \in \{1, \ldots, m\}, j \in \{1, \ldots, n_k\}\}$, for some $k',k'' \in \{1, \ldots, m\}$, $j' \in \{1, \ldots, n_{k'}\}$, $j'' \in \{1, \ldots, n_{k''}\}$, then 
$$\ell_{k',j'}^+ = \ell_{k'',j''}^-.$$

We proceed with the consruction of a finite quotient of $(X_{\Q(x)}, G_{\Q(x)})$ as follows: start with roots $\alpha_{1,1} < \ldots < \alpha_{1,n_1}$ of the polynomial $p_1$ and consider the direct sum of one element spaces
\begin{align*}
\lefteqn{ \left( \{\widehat{\sigma_{1,1}}\}, \langle \ell_{1,1}^- \cdot \ell_{1,1}^+\rangle \right) \sqcup \left( \{\widehat{\sigma_{1,2}}\}, \langle \ell_{1,2}^- \cdot \ell_{1,2}^+\rangle \right) \sqcup \ldots} \\
& \ldots \sqcup \left( \{\widehat{\sigma_{1,n_1-1}}\}, \langle \ell_{1,n_1-1}^- \cdot \ell_{1,n_1-1}^+\rangle \right) \sqcup 
\left( \{\widehat{\sigma_{1,n_1}}\}, \langle \ell_{1,n_1}^- \cdot \ell_{1,n_1}^+\rangle \right).
\end{align*}
Here by $\widehat{\sigma_{1,j}}$ we mean the unique ordering that makes the element $\ell_{1,j}^- \cdot \ell_{1,j}^+$ of the group $\langle \ell_{1,j}^- \cdot \ell_{1,j}^+ \rangle$ negative, $j \in \{1, \ldots, n_1\}$. Now take the group extension of the above space by the two element group generated by the element $h_1$ obtained by multiplying $p_1$ by the product $\ell_{1,1}^+ \cdot \ell_{1,2}^+ \cdot \ldots \cdot \ell_{1, n_1}^+$ and possibly by the element $\epsilon = -1$, so that the resulting polynomial of even degree is negative only on either the interval $(\xi_{1,j}^-, \alpha_{1,j})$ or the interval $(\alpha_{1,j}, \xi_{1,j}^+)$, but not both, for each $j \in \{1, \ldots, n_1\}$, and positive elsewhere. Denote this new space by $(X_1, G_1)$:
\begin{align*}
\lefteqn{ (X_1, G_1) = \left[ \left( \{\widehat{\sigma_{1,1}}\}, \langle \ell_{1,1}^- \cdot \ell_{1,1}^+\rangle \right) \sqcup \left( \{\widehat{\sigma_{1,2}}\}, \langle \ell_{1,2}^- \cdot \ell_{1,2}^+\rangle \right) \sqcup \ldots \right.} \\
& \ldots \left. \sqcup \left( \{\widehat{\sigma_{1,n_1-1}}\}, \langle \ell_{1,n_1-1}^- \cdot \ell_{1,n_1-1}^+\rangle \right) \sqcup 
\left( \{\widehat{\sigma_{1,n_1}}\}, \langle \ell_{1,n_1}^- \cdot \ell_{1,n_1}^+\rangle \right) \right] \times \langle h_1 \rangle.
\end{align*}
As a result of this extension, each ordering $\widehat{\sigma_{1,j}}$ splits into two orderings on the quotient to be constructed that can be identified with $\sigma_{1,j}^-$ and $\sigma_{1,j}^+$, according to $h_1(\sigma_{1,j}^+) = 1$, $h_1(\sigma_{1,j}^-) = -1$. Each pair of orderings $\widehat{\sigma_{1,i}}$, $\widehat{\sigma_{1,j}}$, $1 \leq i < j \leq n_1$, gives rise to a four-element fan $\{\sigma_{1,i}^-, \sigma_{1,i}^+, \sigma_{1,j}^-, \sigma_{1,j}^+\}$, as long as $n_1 \geq 2$.

Now we repeat the whole procedure for each of the remaining polynomials $p_2, \ldots, p_m$, and therefore we construct a sequence of spaces of orderings $(X_k, G_k)$, $k \in \{1, \ldots, m\}$:
\begin{align*}
\lefteqn{ (X_k, G_k) = \left[ \left( \{\widehat{\sigma_{k,1}}\}, \langle \ell_{k,1}^- \cdot \ell_{k,1}^+\rangle \right) \sqcup \left( \{\widehat{\sigma_{k,2}}\}, \langle \ell_{k,2}^- \cdot \ell_{k,2}^+\rangle \right) \sqcup \ldots \right.} \\
& \ldots \left. \sqcup \left( \{\widehat{\sigma_{k,n_k-1}}\}, \langle \ell_{k,n_k-1}^- \cdot \ell_{k,n_k-1}^+\rangle \right) \sqcup 
\left( \{\widehat{\sigma_{k,n_k}}\}, \langle \ell_{k,n_k}^- \cdot \ell_{k,n_k}^+\rangle \right) \right] \times \langle h_k \rangle.
\end{align*}

Relabeling, if necessary, we may assume that $\alpha_{1,1}$ is the smallest real number among $\alpha_{k,1}$, $k \in \{1, \ldots, m\}$, and that $\alpha_{m, 1}$ is the largest one. Finally, in the last step of the proof we set
$$(X_0, G_0) = \left( \{\widehat{\infty^-}\}, \langle \ell_{1,1}^- \rangle \right) \sqcup (X_1, G_1) \sqcup \ldots \sqcup (X_m, G_m) \sqcup \left( \{\widehat{\infty^+}\}, \langle -\ell_{m,n_m}^+ \rangle \right),$$
where $\widehat{\infty^-}$ denotes the unique ordering of the quotient that makes $\ell_{1,1}^-$ negative, and $\widehat{\infty^+}$ denotes the unique ordering that makes $-\ell_{m,n_m}^+$ negative. From the above construction it is clear that $(X_0, G_0)$ is a quotient structure of $(X_{\Q(x)}, G_{Q(x)})$, that is that the orderings of $\Q(x)$ restrict to the orderings in $X_0$, and that $\widetilde{G} \subset G_0$. By the structure theorem for finite spaces of orderings it follows that this quotient structure is, in fact, a quotient space. \end{proof}

{\bf Remarks:} (1) Inspection of the proof readily shows that the expression of $(X_{\Q(x)}, G_{\Q(x)})$ as a profinite space of orders can have a countable index set.  For the elements of $G_{\Q(x)}$ are countable and therefore the collection of quotients can be chosen to have an increasing collection of groups whose union is  $G_{\Q(x)}$.  \\
(2) Techniques similar to the ones developed in the proof of the theorem can be used to investigate certain quotients of the space $(X_{\Q(x)}, G_{\Q(x)})$.\\
(3) The celebrated ``Lam's Open Problem B'' has a positive solution for all spaces of orderings that are profinite (\cite[Remark 5.1, Theorem 5.2]{Marshall1979}). Theorem 1 thus provides yet another proof of Lam's problem for the space of orderings $(X_{\Q(x)}, G_{\Q(x)})$. The result itself is somewhat trivial, as it is well known that the problem has a positive solution for all spaces of stability index at most 3 (\cite[Proposition 3.1 together with the beginning of Section 4]{Marshall2002}), and that the stability index of the space $(X_{\Q(x)}, G_{\Q(x)})$ is equal to 2 (\cite[Proposition VI.3.5]{ABR}), yet we believe it is worth mentioning that profiniteness yields another proof of that fact.\\
(4) An easy variant of the proof shows that if $\Q$ is replaced by $\R$ Theorem 1 still holds, although the index set will no longer be countable.

\section{Inverse limits and the pp conjecture}

Recall that, for a space of orderings $(X, G)$, a positive primitive (pp for short) formula $P(\underline{a})$ with $n$ quantifiers and $k$ parameters in $G$ is of the form
$$P(\underline{a}) = \exists \underline{t} \bigwedge_{j=1}^m p_j(\underline{t}, \underline{a}) \in D_X(1, q_j(\underline{t}, \underline{a})),$$
where $\underline{t} = (t_1, \ldots, t_n)$, $\underline{a} = (a_1, \ldots, a_k)$, for $a_1, \ldots, a_k \in G$, and $p_j(\underline{t}, \underline{a})$, $q_j(\underline{t}, \underline{a})$ are $\pm$ products of some of the $t_i$'s and $a_l$'s, $i \in \{1, \ldots, n\}$, $l \in \{1, \ldots, k\}$, for $j \in \{1, \ldots, m\}$. While speaking of the formula $P(\underline{a})$ in a subspace $Y$, we mean the formula obtained from $P(\underline{a})$ by replacing each atom $p_j(\underline{t}, \underline{a}) \in D_X(1, q_j(\underline{t}, \underline{a}))$ by $p_j(\underline{t}, \underline{a})|_Y \in D_Y(1, q_j(\underline{t}, \underline{a})|_Y)$. The following question, which can be viewed as a type of very general and highly abstract local-global principle, is known as the pp conjecture and was posed in \cite{Marshall2002}: is it true that if a pp formula holds in every finite subspace of a space of orderings, then it also holds in the whole space?

Our main goal in this section is the following theorem, first proven by Astier and Tressl in \cite[Proposition 6]{AstTre}:

\begin{theorem}[Astier, Tressl] If $(X,G) = \underleftarrow{\lim} (X_i, G_i)$, for some inverse system of spaces of orderings $(I, (X_i, G_i), F_{ij})$, and if, for all $i \in I$, the pp conjecture holds in $(X_i, G_i)$, then it also holds in $(X, G)$.
\end{theorem}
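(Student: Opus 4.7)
The plan is to reduce the pp conjecture for $(X, G)$ to the pp conjecture in the individual $(X_i, G_i)$ via the directed-union structure $G = \bigcup_{i \in I} G_i$. Since $\underline{a}$ is finite, fix $i_0 \in I$ with $\underline{a} \in G_{i_0}^k$. The projection $\pi_i : X \to X_i$ is surjective for every $i \succeq i_0$ (a standard consequence of the surjectivity of the $F_{ij}$ together with the compactness of the Boolean spaces $X_j$), so $c(x) = c(\pi_i(x))$ for every $c \in G_i$ and every $x \in X$. This yields the identity
$$D_X(1, q) \cap G_i = D_{X_i}(1, q)$$
for every $q \in G_i$. Applied to each atomic subformula of $P(\underline{a})$, it shows that a tuple $\underline{t} \in G_i^n$ witnesses $P(\underline{a})$ in $(X_i, G_i)$ if and only if it witnesses $P(\underline{a})$ in $(X, G)$. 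Since every element of $G^n$ lies in $G_j^n$ for some $j \succeq i_0$, $P(\underline{a})$ holds in $(X, G)$ if and only if it holds in $(X_i, G_i)$ for some $i \succeq i_0$. The hypothesis that the pp conjecture holds in each $(X_i, G_i)$ then reduces the theorem to exhibiting an $i \succeq i_0$ for which $P(\underline{a})$ holds in every finite subspace of $X_i$.

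The core step is a lifting lemma: for each finite subspace $Z \subset X_i$, produce a finite subspace $Y \subset X$ and representatives $y_z \in \pi_i^{-1}(z)$ for $z \in Z$ such that $\pi_i|_Y$ induces an isomorphism $(Y, G|_Y) \cong (Z, G_i|_Z)$ of spaces of orderings. Given such a $Y$, the hypothesis that $P(\underline{a})$ holds in every finite subspace of $X$ yields a witness for $P$ in $Y$, which transports through the isomorphism to a witness in $Z$, so $P(\underline{a})$ holds in every finite subspace of $X_i$, completing the proof. I would establish the lifting lemma by structural induction on $Z$, using the classification of finite spaces of orderings as iterated direct sums and group extensions of one-element spaces: a one-element $Z = \{z\}$ is lifted by picking any $y \in \pi_i^{-1}(z)$; a direct sum $Z = Z_1 \sqcup Z_2$ is lifted by combining disjoint lifts of $Z_1$ and $Z_2$; a group extension $Z = \overline{Z} \times H$ is lifted by producing $|H|$ independent copies of a lift of $\overline{Z}$.

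The principal obstacle is the inductive construction. The representatives $y_z$ must be chosen so that every fan relation of $Z$ lifts to $X$, and at the same time $G|_Y$ must coincide with the pullback of $G_i|_Z$ under $\pi_i|_Y$. The latter condition is essentially automatic after enlarging the index: since $Y$ is finite, $G|_Y$ is a finite subgroup of $\{-1, 1\}^Y$, so $G|_Y = G_j|_Y$ for some $j \succeq i_0$, and one may carry out the construction inside $X_j$ rather than $X_i$ and then use $F_{ji}$ to descend. The former condition, however, forces us to reflect the direct-sum and group-extension decomposition of $Z$ faithfully in $X$, and this coherent bookkeeping --- made possible by the surjectivity of each $\pi_j$ and the finiteness of $Z$ --- is the delicate heart of the argument.
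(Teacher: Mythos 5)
Your opening reduction is fine and agrees with the paper's: since $\pi_i$ is surjective for $i\succeq i_0$ and $c(x)=c(\pi_i(x))$ for $c\in G_i$, a tuple $\underline{t}\in G_i^n$ witnesses $P(\underline{a})$ in $(X_i,G_i)$ iff it does in $(X,G)$, so it suffices to find one $i$ for which $P(\underline{a})$ holds in $(X_i,G_i)$, and hence (by the hypothesis on the $(X_i,G_i)$) one $i$ for which $P(\underline{a})$ holds in every finite subspace of $X_i$. But the lifting lemma on which you rest the rest of the argument is false. Take $(X,G)$ to be the four-element SAP space: $X=\{x_1,x_2,x_3,x_4\}$, $G=\{-1,1\}^X$ (the direct sum of four one-element spaces), and let $G_0=\langle -1,a,b\rangle$ with $a=(1,1,-1,-1)$, $b=(1,-1,1,-1)$. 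The four restrictions $x_j|_{G_0}$ are exactly the four characters of $G_0$ sending $-1$ to $-1$, so the quotient $(X_0,G_0)$ is the four-element fan, a genuine space of orderings, and $(X,G)=\underleftarrow{\lim}$ of the two-term inverse system $(X,G)\rightarrow(X_0,G_0)$. The finite subspace $Z=X_0$ of $X_0$ admits no finite subspace $Y\subset X$ with $\pi_0|_Y$ an isomorphism onto $(Z,G_0|_Z)$: every fiber of $\pi_0$ is a singleton, so $Y$ would have to be all of $X$, which is SAP and not a fan. Fan relations present in a quotient simply need not be present upstairs, so no choice of representatives and no bookkeeping can make your induction work; in particular the group-extension step (``$|H|$ independent copies of a lift of $\overline{Z}$'') cannot manufacture the required fan relations in $X$. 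The passage you defer as ``the delicate heart of the argument'' is not merely delicate --- it is impossible in general.

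The paper avoids asking for an isomorphic lift by arguing in the contrapositive. Supposing $P(\underline{a})$ fails in every $(X_i,G_i)$, $i\in I_0$, Marshall's Lemma (Lemma 3) produces for each $i$ a failing finite subspace $Z_i=\{x_1^i,\ldots,x_B^i\}$ of \emph{uniformly} bounded size $B=B(n,k)$; the points are lifted arbitrarily to $X$, compactness of $X$ yields $B$ successively refined convergent nets with limits $\mathbf{x}_1,\ldots,\mathbf{x}_B$, and one shows $P(\underline{a})$ fails on the finite subspace $Y$ these limits generate: a witness $\underline{t}$ on $Y$ would place $Y$ inside the clopen set $U=\bigcap_{j}\left(H_X(p_j(\underline{t},\underline{a}))\cup H_X(-q_j(\underline{t},\underline{a}))\right)$, hence eventually place all $\mathbf{x}_p^i$ in $U$ with $\underline{t}\in G_i^n$, making $\underline{t}$ a witness on $Z_i$ --- a contradiction. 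Note that $(Y,G|_Y)$ is never claimed to be isomorphic to any $(Z_i,G_i|_{Z_i})$. The uniform bound from Marshall's Lemma and the topological limit argument are precisely what substitute for your lifting lemma, and both are absent from your proposal.
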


The original proof given by Astier and Tressl uses techniques from model theory, while ours utilizes only basic notions from the theory of spaces of orderings and some elementary topology. Both proofs make use of the following lemma proved by Marshall, that first appeared in print in \cite[Lemma 4]{AstTre}:

\begin{lemma}[Marshall] \label{boundcard}
Let $B(n, 0) = 1$ for $n \in \N$, and let
$$B(n, k) = 2^k 2^{2^{nk}B(n, k-1)}, \mbox{ if } k \geq 1, n \in \N.$$
Then, for every space of orderings $(X, G)$, for every $\underline{a} \in G^k$, and for every pp formula $P(\underline{y})$ with $n$ quantifiers and $k$ parameters, if $P(\underline{a})$ fails to hold in $(Z, G|_Z)$, for a finite subspace $Z$ of $(X, G)$ (or, more generally, a subspace $Z$ such that $(Z, G|_Z)$ has a finite chain length), then there is a subspace $Y$ of $(X, G)$ such that $P(\underline{a})$ fails to hold in $(Y, G|_Y)$ and $|Y| \leq B(n,k)$.
\end{lemma}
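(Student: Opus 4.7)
The plan is to induct on $k$, keeping $n$ fixed. For the base case $k=0$, the formula $P(\underline t) = \exists \underline t \bigwedge_j p_j(\underline t) \in D_X(1, q_j(\underline t))$ has each $p_j(\underline t)$ and $q_j(\underline t)$ a signed monomial in the $t_i$'s, so the atoms depend on a witness $\underline t$ only through its pointwise sign values. In particular, given any witness $\underline t \in G^n$ and any chosen point $y_0$ in the underlying nonempty space, the constant tuple $\underline\epsilon := (t_1(y_0), \ldots, t_n(y_0)) \in \{\pm1\}^n \subseteq G^n$ is again a witness, since each atom $p_j(\underline\epsilon) \in D(1, q_j(\underline\epsilon))$ reduces to the sign relation $p_j(\underline\epsilon) = 1 \vee p_j(\underline\epsilon) = q_j(\underline\epsilon)$, which is a condition on constants that automatically holds at every point once it holds at $y_0$. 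Consequently the truth value of $P$ is independent of the nonempty subspace considered, and failure on $Z$ immediately passes to any singleton $Y \subset X$. This handles $B(n,0) = 1$.

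For the inductive step, assume the lemma for $k-1$, and suppose $P(\underline a)$ fails on $Z$ with $\underline a = (a_1, \ldots, a_k) \in G^k$. I would first exploit the partition of $X$ into the $2^k$ Harrison cells $X_\sigma = \bigcap_{i=1}^k H_X(\sigma_i a_i)$ for $\sigma \in \{\pm1\}^k$. On each cell the parameters $a_i$ become $\pm 1$ constants, so $P(\underline a)|_{X_\sigma}$ specializes to a parameter-free pp formula $P_\sigma(\underline t)$ with the same $n$ quantifiers. Choosing one representative point from each nonempty cell yields a ``skeleton'' of at most $2^k$ points that will belong to $Y$. To capture the actual obstruction to $P(\underline a)$ holding on $Z$, I would then enumerate the possible sign patterns of a putative witness $\underline t \in (G|_Z)^n$ along the skeleton --- at most $2^{nk}$ of them --- and, for each such pattern, substitute the prescribed sign data into $P(\underline a)$ to produce a pp formula with one fewer effective parameter. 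Applying the inductive hypothesis to each of the $2^{nk}$ resulting sub-problems yields for each pattern a subspace of size at most $B(n, k-1)$ where the reduced formula fails. The union of the skeleton with all of these subspaces has cardinality at most $2^k \cdot 2^{2^{nk} B(n, k-1)} = B(n, k)$.

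The chief obstacle is the inductive reduction itself. Failure of $P(\underline a)$ on $Z$ does not straightforwardly descend to failure of a single $P_\sigma$ on a single cell, because the group $G|_Z$ is in general a proper subgroup of the product $\prod_\sigma G|_{Z_\sigma}$: locally valid witnesses on each cell need not glue to a global witness on $Z$. The $2^{nk}$ pattern enumeration is designed precisely to account for the finitely many possible ``gluing types'' of a global witness against the skeleton, and the technical care in the argument lies in verifying (i) that fixing a pattern genuinely reduces the number of parameters, so that the inductive hypothesis applies with $B(n, k-1)$, and (ii) that the final subspace $Y$ witnesses the failure of the original formula $P(\underline a)$ and not merely of some approximation. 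This bookkeeping is the source of the doubly-exponential growth in the bound $B(n, k)$.
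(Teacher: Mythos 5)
First, a point of reference: the paper does not prove this lemma at all --- it is quoted as Marshall's and attributed to \cite[Lemma 4]{AstTre}, so there is no in-paper argument to compare yours against. Judged on its own, your base case $k=0$ is correct and complete: a witness $\underline{t}$ on a nonempty $Z$ can be replaced by the constant tuple of its signs at any point $y_0$, so truth of a parameter-free pp formula is the same on every nonempty subspace, and failure passes to a singleton (which is a subspace because $G$ separates points).

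The inductive step, however, has genuine gaps. (i) Your cardinality accounting does not produce $B(n,k)$: the union of a skeleton of at most $2^k$ points with at most $2^{nk}$ subspaces each of size at most $B(n,k-1)$ has at most $2^k + 2^{nk}B(n,k-1)$ points, which is nowhere near $2^k\,2^{2^{nk}B(n,k-1)}$; the equality you assert at the end is a non sequitur. The doubly exponential shape of $B(n,k)$ is itself a hint that the intended $Y$ is not a union of points but the subspace \emph{generated} by roughly $k+2^{nk}B(n,k-1)$ points --- a union of subspaces need not be a subspace, and the subspace generated by $M$ points can have on the order of $2^{M}$ elements (compare the use of \cite[Note 1 p.\ 39]{Marshall1996} in the proof of Theorem 2). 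Your write-up never addresses either of these two facts, and both are needed to make the bound come out. (ii) The pattern count is also off: a sign pattern of an $n$-tuple $\underline{t}$ along a skeleton of up to $2^k$ points is an element of $(\{\pm 1\}^{2^k})^n$, giving $2^{n2^k}$ patterns, not $2^{nk}$. (iii) Most importantly, the heart of the induction is missing. You assert that fixing a sign pattern of the putative witness along the skeleton ``produces a pp formula with one fewer effective parameter,'' but no mechanism is given for why the parameter count drops (fixing values of the $t_i$ at points constrains the quantified variables; it does not eliminate any $a_l$), nor is it shown that each reduced formula actually \emph{fails} on a suitable subspace so that the inductive hypothesis applies, nor that failure of the $2^{nk}$ reduced problems reassembles into failure of $P(\underline{a})$ on $Y$ (local witnesses on the Harrison cells need not glue inside $G|_Y$, as you yourself note). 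You have correctly identified these as the obstacles, but identifying them is not the same as overcoming them: as written, the argument establishes only the case $k=0$.
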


We now proceed to the proof of the theorem.

\begin{proof} Let $(X,G) = \underleftarrow{\lim} (X_i, G_i)$, and let
$$P(\underline{a}) = \exists \underline{t} \bigwedge_{j=1}^m p_j(\underline{t}, \underline{a}) \in D_X(1, q_j(\underline{t}, \underline{a})),$$
where $\underline{t} = (t_1, \ldots, t_n)$, $\underline{a} = (a_1, \ldots, a_k)$, for $a_1, \ldots, a_k \in G$, and $p_j(\underline{t}, \underline{a})$, $q_j(\underline{t}, \underline{a})$ are $\pm$ products of some of the $t_i$'s and $a_l$'s, $i \in \{1, \ldots, n\}$, $l \in \{1, \ldots, k\}$, be a pp formula that holds true on every finite subspace of $(X, G)$. Moreover, let $I_0 = \{i \in I: a_1, \ldots, a_k \in \gamma_i(G_i)\}$. It suffices to show that, for some $i \in I_0$, the formula $P(\underline{a})$, holds true in $(X_i, G_i)$ (note that, for $i \in I_0$, $a_1, \ldots, a_k \in G_i$).

Suppose, {\em a contrario,} that $P(\underline{a})$ fails in $(X_i, G_i)$, for all $i \in I_0$. Since the pp conjecture holds true in every $(X_i, G_i)$, by Lemma \ref{boundcard} for every $i \in I_0$ there exists a finite subspace of $(X_i, G_i)$ of $B$ elements, $Z_i = \{x_1^i, \ldots, x_B^i\}$, such that $P(\underline{a})$ already fails in $(Z_i, G_i|_{Z_i})$.

Let $\mathbf{x}_p^i \in \pi_i^{-1}(x_p^i)$, $p \in \{1, \ldots, B\}$, $i \in I_0$. $\{\mathbf{x}_1^i: i \in I_0\}$ is a net in the compact space $X$, and hence has a cluster point $\mathbf{x}_1$. Let $\{\mathbf{x}_1^i: i \in J_1\}$ be a net finer than $\{\mathbf{x}_1^i: i \in I_0\}$ that converges to $\mathbf{x}_1$, $J_1 \subset I_0$. Now, $\{\mathbf{x}_2^i: i \in J_1\}$ is a net that has a cluster point $\mathbf{x}_2$, and let $\{\mathbf{x}_2^i: i \in J_2\}$ be a net finer than $\{\mathbf{x}_2^i: i \in J_1\}$ that converges to $\mathbf{x}_2$, $J_2 \subset J_1$. Recursively we will eventually construct a net $\{\mathbf{x}_B^i: i \in J_B\}$ convergent to $\mathbf{x}_B$, and finer than the net $\{\mathbf{x}_B^i: i \in J_{B-1}\}$, whose cluster point is $\mathbf{x}_B$, $J_B \subset J_{B-1}$. Then, as each net refines the previous one, for $p \in \{1, \ldots, B\}$, $\mathbf{x}_p$ is the limit of the net
$$\{\mathbf{x}_p^i: i \in J_B\}.$$

Let $Y$ be the subspace of $(X, G)$ generated by $\mathbf{x}_1, \ldots, \mathbf{x}_B$. By \cite[Note 1 p. 39]{Marshall1996}, $Y$ is finite. We claim that the formula $P(\underline{a})$ fails on $(Y, G|_Y)$. Suppose that $P(\underline{a})$ holds true in $(Y, G|_Y)$ with $\underline{t} = (t_1, \ldots, t_n)$ verifying it, for $t_1, \ldots, t_n \in G$. Let
$$U = \bigcap_{j=1}^m \left( H_X(p_j(\underline{t}, \underline{a})) \cup H_X(-q_j(\underline{t}, \underline{a}))\right).$$
Clearly $Y \subset U$, and, in particular, $\mathbf{x}_1, \ldots, \mathbf{x}_B \in U$. Since each $\mathbf{x}_p$ is a limit of the net $\{\mathbf{x}_p^i: i \in J_B\}$, $p \in \{1, \ldots, B\}$, there is $i_0 \in J_B$ such that $\mathbf{x}_p^j \in U$, for all $j \succeq i_0$, $j \in J_B$, and for all $p \in \{1, \ldots, B\}$. Moreover, there is $i_1 \in J_B$ such that $t_1, \ldots, t_n \in G_j$, for all $j \succeq i_1$, $j \in J_B$. Take $i \succeq \max \{i_0, i_1\}$. Then
$$x_p^i = \pi_i(\mathbf{x}_p^i) \in \bigcap_{j=1}^m \left(H_{X_i}(p_j(\underline{t}, \underline{a})) \cup H_{X_i}(-q_j(\underline{t}, \underline{a}))\right),$$
so that $P(\underline{a})$ holds true in $(Z_i, G_i|_{Z_i})$ -- a contradiction.\end{proof}

{\bf Remarks:} (1) As an immediate consequence of the above theorem we get that the pp conjecture holds true in $(X_{\Q(x)}, G_{\Q(x)})$. This has been already shown in \cite{DicMarMir}, 
where the proof is relying on the structure of real valuations of $\Q(x)$.\\
(2) Since the pp conjecture fails for spaces of orderings of function fields of rational conic sections without rational points, for the space of orderings of $\Q(x_1, \ldots, x_n)$, $n \geq 2$, or for the space of orderings of $\R(x_1, \ldots, x_n)$, $n \geq 2$ (see \cite{GlaMara}, \cite{GlaMarb}), neither of these spaces can be profinite.\\
(3) It would be interesting to find an example of a space of orderings which is not profinite, yet which satisfies the pp conjecture.\\
(4) Since Lam's Open Problem B is implied by the pp conjecture (see \cite{Marshall2002}), we have yet another proof of the fact that the problem has an affirmative solution for the field $\Q(x)$. This adds to our discussion in Remark (2) towards the end of the previous section.

\end{document}